\title{On the classification of automorphisms of trees}
\author{Kyle Beserra}
\address{Kyle Beserra, University of North Texas, 1155 Union Circle \#311430, Denton, Texas 76203}
\email{KyleBeserra@my.unt.edu}
\author{Samuel Coskey}
\address{Samuel Coskey, Boise State University, 1910 University Dr, Boise, ID 83725}
\email{scoskey@gmail.com}
\urladdr{scoskey.org}
\subjclass[2010]{03E15, 05C05, 05C63, 20E45}
\keywords{Borel complexity theory, conjugacy, regular trees}
\tikzset{->-/.style={decoration={markings,mark=at position 0.5 with {\arrow{>}}},postaction={decorate}}}
\newtheorem{theorem}{Theorem}[section]
\newtheorem{lemma}[theorem]{Lemma}
\newtheorem{proposition}[theorem]{Proposition}
\theoremstyle{definition}
\theoremstyle{remark}
\newtheorem*{remark}{Remark}
\DeclareMathOperator{\Aut}{Aut}
\DeclareMathOperator{\OT}{OT}
\newcommand{\F}{{\mathbb F}}
\newcommand{\Z}{{\mathbb Z}}
\makeatletter\pretocmd{\@seccntformat}{\S}{}{}
  \pretocmd{\@subseccntformat}{\S}{}{}\makeatother
\begin{document}
\begin{abstract}
  We identify the complexity of the classification problem for automorphisms of a given countable regularly branching tree up to conjugacy. We consider both the rooted and unrooted cases. Additionally, we calculate the complexity of the conjugacy problem in the case of automorphisms of several non-regularly branching trees.
\end{abstract}
\maketitle

\section{Introduction}

It is a result of J.\ Tits \cite[Proposition~3.2]{tits} that every tree automorphism falls into exactly one of the following three types:
\begin{enumerate}
\item invert an edge;
\item translate a bi-infinite path; or
\item fix a subtree.
\end{enumerate}
The result is also exposited in \cite[Section~6]{serre}. While this result is both beautiful and useful, it is an incomplete classification in the sense that within each of the three classes there are still many different automorphisms that are distinct up to conjugacy equivalence. In \cite{gawron}, the authors further classify each of the three types of automorphisms of a given tree up to conjugacy using a variety of invariants such as marked trees.

In this article we investigate the \emph{complexity} of the classification of automorphisms of a given tree up to conjugacy. While the results in \cite{gawron} can easily be used to obatin upper bounds on complexity, this still leaves several key questions. First, what is the precise complexity of these invariants? And second, are these upper bounds tight? In short, how hard is it to completely classify the automorphisms of a given tree up to conjugacy?

Our main result is to identify the precise complexity of the conjugacy problem for automorphisms of countable regularly branching trees. We address the cases of both ordinary (graph-theoretic) trees and of rooted (set-theoretic) trees. We also address the complexity of each of the three types (a)--(c) separately.

In order to state these questions and results formally, it is necessary to adopt the language of invariant descriptive set theory. In this framework the objects to be classified are coded as elements of a standard Borel space, and the classification itself is identified with an equivalence relation $E$ on that space. One may then compare the complexity of classification problems using the key notion of Borel reducibility. Here, if $E,F$ are equivalence relations on standard Borel spaces $X,Y$, we say that $E$ is \emph{Borel reducible} to $F$ if there exists a Borel mapping $f\colon X\rightarrow Y$ such that
\[x\mathrel{E}x'\iff f(x)\mathrel{F}f(x')\text{.}
\]
In this case we write $E\leq_BF$.

Some of the simplest classification problems are those which are Borel reducible to the equality equivalence relation $\Delta(2^\omega)$ on the standard Borel space $2^\omega$. Such classification problems are called \emph{smooth}. Just above $\Delta(2^\omega)$ is the \emph{almost equality} relation $E_0$ on $2^\omega$ defined by $x\mathrel{E}_0x'$ iff $x(n)=x'(n)$ for all but finitely many $n$. By the Glimm--Effros dichotomy, if $E$ is a Borel equivalence relation then either $E$ is smooth or else $E_0\leq_B E$. Moving towards the higher end of the spectrum we have the isomorphism relation on the class of all countable structures in a countable language. If $E$ is bireducible with this equivalence relation then we say that $E$ is \emph{Borel complete}.

In our case the objects to be classified are the automorphisms of a fixed countable tree $T$. This is naturally a standard Borel space, as it is easy to see that $\Aut(T)$ is a Polish group with the topology of pointwise convergence. The classification we are interested in is the conjugacy equivalence relation on $\Aut(T)$. The conjugacy equivalence relation on automorphisms $\phi$ of $T$ may also be viewed as the isomorphism equivalence relation on expanded structures $(T,\phi)$. Thus the complexity of the conjugacy problem for automorphisms of $T$ is at most Borel complete.

Conjugacy problems have been investigated in the context of Borel reducibility under numerous circumstances, including operator theory (for example \cite{lupini-gardella}), ergodic theory (for example \cite{foreman-weiss}), and many others. One of the present authors has studied in \cite{ces,ce1} the conjugacy problem for automorphisms of ultrahomogeneous structures such as the random graph and the rational order. Our present study of regular trees serves to broaden the scope of that investigation, since regular trees are $1$-homogeneous but not ultrahomogeneous.

In the next section we collect several folklore results on countable rooted trees. We show that the classification of automorphisms of a finitely branching rooted tree is smooth. On the other hand the classification of automorphisms of the fully branching rooted tree is Borel complete. In the third section we study the countable regularly branching unrooted trees. We show that the conjugacy classification of automorphisms of a finitely branching regular unrooted tree is Borel bireducible with $E_\infty$, the universal countable Borel equivalence relation. We also show that the classification of automorphisms of the fully branching unrooted tree is Borel complete. In the fourth and final section we analyze the conjugacy problem for automorphisms of several non-reguarly branching trees. We consider a class of examples of non-regular rooted trees, as well as a single example of a non-regular unrooted tree. We also raise some broad questions regarding the conjugacy problem for automorphisms of a general tree or graph.

\textbf{Acknowledgement.} The work represents a portion of the first author's master's thesis \cite{kyle-thesis}. The thesis was written at Boise State University under the second author's supervision. We would like to thank John Clemens and Simon Thomas for helpful conversations and suggestions.

\section{Rooted trees}

In this section we calculate the complexity of the classification of countable regularly branching rooted trees. For us, a countable rooted tree is a subset $T$ of the set $\omega^{<\omega}$ of all finite sequences of natural numbers, with the property that $T$ is closed under initial segments. Two vertices of $T$ are joined by an edge if one is the immediate successor or predecessor of the other.

We show first that the automorphisms of a given finitely branching rooted tree are smoothly classifiable up to conjugacy. The classification we provide does not extend to arbitrary infinitely branching rooted trees. We show instead that the conjugacy problem for the infinitely branching regular rooted tree is Borel complete.

\begin{theorem}
  \label{thm:fb-rooted}
  Let $T$ be a finitely branching rooted tree. Then the conjugacy problem for $\Aut(T)$ is smooth.
\end{theorem}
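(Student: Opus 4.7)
The plan is to construct a complete Borel conjugacy invariant $I(\phi)$ taking values in a standard Borel space of finitely branching, $\omega$-edge-labelled rooted trees. The essential observation is that every level $L_n$ of a finitely branching rooted tree $T$ is finite, so each automorphism $\phi \in \Aut(T)$ restricts to a permutation of $L_n$ with finitely many cycles. Recording these cycle decompositions together with the tree structure produces an invariant whose coincidence will characterise conjugacy.

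I would define $I$ recursively from the root down. If $v$ is a vertex of $T$ which is $\phi$-periodic with period $p$, then $\phi^p$ fixes $v$ and permutes its children. For each cycle $C = (c_0, \ldots, c_{k-1})$ of this permutation, record the pair $(k, I(T_{c_0}, \phi^{kp}|_{T_{c_0}}))$, where $T_{c_0}$ is the subtree rooted at $c_0$; the invariant $I(\phi, v)$ is the unordered multiset of these pairs over all cycles of $\phi^p$ on the children of $v$. The choice of representative $c_0$ within a cycle does not affect the result, since any two elements of a cycle give isomorphic decorated subtrees via a power of $\phi$. Starting from the root with $p = 1$, this determines $I(\phi) := I(\phi, \emptyset)$. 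Borel measurability is immediate, because the depth-$n$ portion of $I(\phi)$ is determined by the action of $\phi$ on the finite set $\bigcup_{i\le n} L_i$, and this in turn depends continuously on finitely many coordinates of $\phi$.

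The bulk of the verification is to show that $I$ is complete: $\phi$ and $\psi$ are conjugate in $\Aut(T)$ if and only if $I(\phi) = I(\psi)$. The forward direction is immediate from the recursive definition, since conjugation by a tree automorphism preserves cycle lengths and induces isomorphisms on the resulting subtrees. For the converse I would build a conjugator $\sigma \in \Aut(T)$ level by level: the equality of the two invariants at a periodic vertex $v$ and its prospective $\sigma$-image provides a bijection between the $\phi^p$-cycles and the $\psi^p$-cycles on the children which matches both cycle lengths and sub-invariants, and the inductive hypothesis supplies conjugators on each matched pair of subtrees that paste together to extend $\sigma$. The main point to verify carefully is that these local choices can be assembled coherently along the infinite descending chains of $T$; the finite branching ensures that at each vertex only finitely many choices arise, so a standard K\"onig-style diagonalisation produces the required global $\sigma$.
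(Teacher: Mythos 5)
Your argument is correct, but it takes a genuinely different route from the paper's proof of this theorem. The paper's proof is a two-line soft argument: $\Aut(T)$ embeds as a closed subgroup of the compact group $\prod_n (T_n)^{T_n}$ (where $T_n$ is the height-$<n$ truncation), so it is compact, and the orbit equivalence relation of any continuous action of a compact group --- in particular the conjugation action --- is smooth. Your construction of an explicit complete invariant via cycle decompositions is instead essentially the \emph{labeled orbit tree} classification that the paper relegates to the remark immediately following the theorem (following \cite[Section~3]{gawron} and \cite[Section~10]{dougherty-jackson-kechris}): your hereditary multiset of pairs $(k, I(\cdot))$ is a re-packaging of the orbit tree labeled by orbit cardinalities, your appeal to finite levels and K\"onig's lemma to assemble the conjugator is exactly the paper's ``classified by the sequence of isomorphism types of its finite truncations,'' and your level-by-level pasting is the paper's inductive construction of a conjugator from an orbit-tree isomorphism. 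Two small points deserve explicit care in a full write-up: (i) for an infinite tree your recursion for $I$ does not terminate, so the invariant must officially be taken to be the sequence of depth-$n$ truncations (each a hereditarily finite multiset, codable as a natural number), which is what you implicitly do when you argue Borelness; (ii) when you match a $\phi^p$-cycle of children of $v$ to a $\psi^p$-cycle of children of $\sigma(v)$, the conjugator must be defined coherently on the entire $\phi$-orbit of the chosen representative subtree (by transporting a single choice via powers of $\phi$ and $\psi$), not independently on each subtree in the cycle. Neither point is a gap, just bookkeeping. The trade-off between the two approaches: the compactness argument is shorter and requires no invariants at all, while your combinatorial classification yields the explicit invariants that the paper in fact needs later (for instance, to count conjugacy classes of type~(a) automorphisms of $R_n$ and to build the reverse reductions).
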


\begin{proof}
	  First observe that the group $\Aut(T)$ is compact. Indeed, letting $T_n$ denote the truncation of $T$ to its elements of height $<n$, we can identify $\Aut(T)$ with a closed subspace of the compact space $\prod_{n\in\omega}(T_n)^{T_n}$ via the mapping $\alpha\mapsto(\alpha\restriction T_n)$. Next observe that the conjugacy relation on $\Aut(T)$ is the orbit equivalence relation induced by the conjugation action of $\Aut(T)$ on itself. It is well-known that an orbit equivalence relation induced by the continuous action of a compact group is smooth, since the space of orbits (with the Effros Borel structure) may be used as a set of complete invariants. It follows from this that the conjugacy relation on $\Aut(T)$ is smooth.
\end{proof}

\begin{remark}
  It is also possible to give a combinatorial classification for the conjugacy problem on $\Aut(T)$, as is done in \cite[Section~3]{gawron}, which is a straightforward generalization of the methods of \cite[Section~10]{dougherty-jackson-kechris}. Since the details will be used in the next section we include a brief summary here.
  
  If $T$ is a finitely branching rooted tree and $\phi\in\Aut(T)$, we define its \emph{labeled orbit tree} $\OT_\phi$ as follows. The vertices of $\OT_\phi$ are the orbits of the action of $\phi$ on $T$. For any $O,O'\in\OT_\phi$, we say that $O<O'$ if for every $x\in O$ there exists $y\in O'$ such that $x<y$. Finally we label each vertex $O\in\OT_\phi$ with the natural number $|O|$.

	  Now the isomorphism relation on the class of finitely branching labeled orbit trees is smooth. In fact by a standard application of K\"onig's tree lemma, a finitely branching labeled tree is classified by the sequence of isomorphism types of its finite truncations. Moreover we claim that the map $\phi\mapsto\OT_\phi$ is a Borel reduction from the conjugacy relation on $\Aut(T)$ to the isomorphism relation on orbit trees.

	  Indeed, if $\alpha$ conjugates $\phi$ to $\psi$, then it induces a bijection from $\phi$-orbits to $\psi$-orbits which preserves the ordering defined above, as well as the cardinality of the orbits. Conversely, if $\alpha\colon\OT_\phi\to\OT_\psi$ is an isomorphism, then by selecting for each $t\in O\in\OT_\phi$ an appropriate destination $\alpha_0(t)\in\alpha(O)\in\OT_\psi$ one can inductively construct an automorphism which conjugates $\phi$ to $\psi$. This concludes the remark.
\end{remark}

While it is also possible to classify automorphisms of a non-finitely branching tree by their orbit trees, the orbit trees themselves need not be classifiable. In fact, the next result shows that the classification of automorphisms of the regular infinitely branching rooted tree $\omega^{<\omega}$ is not smooth.

\begin{theorem}
  \label{thm:full-tree-complete}
  The conjugacy problem for $\Aut(\omega^{<\omega})$ is Borel complete.
\end{theorem}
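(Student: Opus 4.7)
The plan is to exhibit a Borel reduction from the isomorphism relation on countable rooted trees, a well-known Borel complete equivalence relation, to the conjugacy relation on $\Aut(\omega^{<\omega})$. Countable rooted trees are coded as subsets $S \subseteq \omega^{<\omega}$ closed under initial segments. I would first canonicalize the embedding so that at each $v \in S$ the $S$-children of $v$ occupy the initial indices; that is, if $v \in S$ has $k_v \le \omega$ children in $S$, then those children are $v^\frown 0, v^\frown 1, \ldots$ up to $v^\frown(k_v - 1)$, with all remaining indices labeling non-$S$ children. This ensures the non-$S$ children of each $v \in S$ form either the empty set or an infinite set $\{v^\frown n : n \ge k_v\}$, and the canonicalization is a Borel operation preserving isomorphism class.

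Next, I would construct an involution $\phi_S \in \Aut(\omega^{<\omega})$ whose fixed set is exactly $S$. Explicitly, $\phi_S$ fixes every vertex of $S$; for each $v \in S$ with infinitely many non-$S$ children it swaps $v^\frown(k_v + 2i)$ with $v^\frown(k_v + 2i + 1)$ for each $i \in \omega$; and below each swapped pair the action is extended by copying suffixes, so that $\phi_S$ has order dividing $2$ throughout. The map $S \mapsto \phi_S$ is then Borel.

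It remains to verify that $S \cong S'$ if and only if $\phi_S$ is conjugate to $\phi_{S'}$ in $\Aut(\omega^{<\omega})$. The reverse direction is almost immediate: any conjugator $\beta$ between $\phi_S$ and $\phi_{S'}$ carries the fixed set of $\phi_S$, namely $S$, bijectively onto the fixed set of $\phi_{S'}$, namely $S'$; since $\beta$ preserves the tree structure, $\beta \restriction S$ is a tree isomorphism. For the forward direction, a tree isomorphism $\alpha \colon S \to S'$ extends to $\tilde\alpha \in \Aut(\omega^{<\omega})$ by setting $\tilde\alpha(v^\frown n) = \alpha(v)^\frown n$ for $v \in S$ and $n \ge k_v$ and copying suffixes into the subtrees below; a direct check on fixed points and on swapped pairs confirms that $\tilde\alpha$ conjugates $\phi_S$ to $\phi_{S'}$.

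The main obstacle is bookkeeping rather than substance: setting up the canonical embedding and the pairings of non-$S$ children so that $\phi_S$ and $\tilde\alpha$ can be specified unambiguously. Conceptually the idea is transparent, since $\phi_S$ is an involution whose action off its fixed set follows a rigid, tree-uniform pattern; the fixed set is therefore a complete conjugacy invariant, and it already remembers the rooted tree up to isomorphism.
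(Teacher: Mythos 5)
Your proposal is correct and follows essentially the same strategy as the paper: both reduce the (Borel complete) isomorphism relation on countable rooted trees to conjugacy by producing, in a Borel and uniform way, an automorphism of $\omega^{<\omega}$ whose fixed-point set is a copy of the given tree, so that the fixed set is a complete conjugacy invariant. The only differences are cosmetic --- you canonicalize the subtree coding and use explicit order-two swaps of the non-$S$ children, whereas the paper attaches fresh copies of $\omega^{<\omega}$ at each vertex and acts on them by a fixed automorphism $\sigma$ with no fixed points besides the root.
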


\begin{proof}
  By \cite{friedman-stanley}, the isomorphism relation on countable rooted trees is Borel complete. Thus it suffices to show that there is a Borel reduction from the isomorphism relation on countable rooted trees to the conjugacy relation on $\Aut(\omega^{<\omega})$.

  Let $T$ be a given countable rooted tree. For the proof we choose an automorphism $\sigma$ of $\omega^{<\omega}$ with no fixed points other than the root. To begin the construction, we embed $T$ into a copy $C_T$ of $\omega^{<\omega}$ as follows. For each vertex $x\in T$ we attach a new copy $T_x$ of $\omega^{<\omega}$ to $T$ with $x$ as its root. The new successors of each $x\in T$ are interleaved with the original successors of $x$ in a straightforward way. We then let $\phi_T$ be the automorphism of $C_T$ which fixes each $x\in T$ and acts by $\sigma$ on each copy $T_x$ of $\omega^{<\omega}$.
  
  We claim that the mapping $T\mapsto\phi_T$ is a Borel reduction from the isomorphism relation on countable rooted trees to conjugacy relation on $\Aut(\omega^{<\omega})$. Indeed, if $\alpha\colon T\to T'$ is an isomorphism, then $\alpha$ naturally extends to an isomorphism $\bar\alpha\colon C_T\to C_{T'}$. Then $\phi_{T'}\circ\bar\alpha(x)=\bar\alpha(x)=\bar\alpha\circ\phi_T(x)$ for every $x\in T$, and it is easy to see that $\phi_{T'}\circ\bar\alpha(t)=\bar\alpha\circ\phi_T(t)$ for every $t\in T_x$ too. Thus $\phi_T$ is conjugate to $\phi_{T'}$.
  
  Conversely, if $\alpha$ conjugates $\phi_T$ to $\phi_{T'}$, then $\alpha$ must carry the fixed points of $\phi_T$ to the fixed points of $\phi_{T'}$. By construction, the set of fixed points of $\phi_T$ is isomorphic to $T$, and similarly for $\phi_{T'}$. It follows that $\alpha$ witnesses that $T\cong T'$, as desired.
\end{proof}

This completes the study of the classification problem for automorphisms of countable regular rooted trees. In Section~4 we will briefly address some examples of non-regular rooted trees.

\section{Unrooted trees}

Recall from the introduction that every automorphism $\phi$ of an unrooted tree $T$ must have exactly one of three types. We now elaborate on the three types:
\begin{enumerate}
\item $\phi$ inverts an edge, that is, there are $x,y\in T$ with $x\sim y$ and $\phi(x)=y=\phi^{-1}(x)$;
\item $\phi$ translates a bi-infinite path, that is, there are distinct $x_n\in T$ for $n\in\mathbb Z$ and $k>0$ such that $x_n\sim x_{n+1}$ and $\phi(x_n)=x_{n+k}$ for all $i$; or
\item $\phi$ fixes a subtree, that is, there exists a nonempty connected $S\subset T$ such that $\phi(x)=x$ for all $x\in S$.
\end{enumerate}
In this section we will find the complexity of the conjugacy problem for automorphisms of a regular unrooted (graph-theoretic) tree. We additionally break the space of automorphisms into the types (a)--(c), and find the complexity of conjugacy on each subset.

We begin with an observation regarding the conjugacy problem for automorphisms of type~(a) for a general finitely branching unrooted tree.

\begin{proposition}
  \label{prop:typea}
  Let $T$ be a finitely branching tree. The conjugacy problem for automorphisms of $T$ of type~(a) is smooth.
\end{proposition}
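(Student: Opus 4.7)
The plan is to reduce conjugacy to the orbit equivalence relation of a compact group action. First I would observe that every type~(a) automorphism $\phi$ inverts a unique edge, denoted $e_\phi$, and that the map $\phi\mapsto e_\phi$ is Borel from the type~(a) part $A\subseteq\Aut(T)$ into the countable edge set $E(T)$. If $\psi=\alpha\phi\alpha^{-1}$ then $e_\psi=\alpha(e_\phi)$, so the $\Aut(T)$-orbit of $e_\phi$ is a conjugacy invariant. Writing $\mathcal{O}$ for the countable set of $\Aut(T)$-orbits on $E(T)$, this partitions $A$ into countably many conjugacy-invariant Borel sets $A_O=\{\phi\in A:e_\phi\in O\}$, so it will suffice to classify each $A_O$ smoothly and then take the disjoint union.

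Fix an orbit $O$ and a representative edge $e=\{x,y\}\in O$, and let $A_e\subseteq A_O$ denote the closed set of automorphisms inverting $e$. For each $e'\in O$, fix once and for all a single element $\alpha_{e'}\in\Aut(T)$ with $\alpha_{e'}(e')=e$; since $O$ is countable this requires no measurable uniformization. Then the map $\Phi\colon A_O\to A_e$ defined by $\Phi(\phi)=\alpha_{e_\phi}\phi\alpha_{e_\phi}^{-1}$ is Borel, and by construction each $\Phi(\phi)$ is $\Aut(T)$-conjugate to $\phi$.

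Next I would verify that the setwise stabilizer $\Aut(T)_e$ is compact: by an argument analogous to the one in the proof of Theorem~\ref{thm:fb-rooted}, the pointwise stabilizer $\Aut(T)_{x,y}$ embeds as a closed subspace of the compact product of finite symmetric groups on the successive balls around $e$, hence is compact, and $\Aut(T)_e$ is the union of at most two cosets of it. Furthermore, since any $\phi\in A_e$ inverts no edge other than $e$, two such elements $\phi,\psi$ are $\Aut(T)$-conjugate if and only if they are conjugate via an element of $\Aut(T)_e$.

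It will then follow that conjugacy on $A_e$ is the orbit equivalence relation of the compact group $\Aut(T)_e$ acting continuously by conjugation, hence smooth by the same general fact invoked in the proof of Theorem~\ref{thm:fb-rooted}. Composing with $\Phi$ yields a Borel reduction of conjugacy on $A_O$ to this smooth equivalence relation, and taking the countable disjoint union over $O\in\mathcal{O}$ completes the argument. I expect the only mildly delicate step to be the compactness of $\Aut(T)_e$ in the unrooted setting; conceptually, once we condition on the inverted edge we are effectively working inside the automorphism group of a (finitely branching) rooted tree, where Theorem~\ref{thm:fb-rooted} already supplies smoothness.
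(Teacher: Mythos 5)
Your argument is correct and takes essentially the same route as the paper's: decompose the type~(a) automorphisms according to the conjugacy class of the (unique) inverted edge, normalize via a fixed countable family of automorphisms carrying each edge to its orbit representative, and then observe that conjugacy on the resulting piece is the orbit equivalence relation of a continuous action of a compact group. The only cosmetic difference is that the paper packages the setwise stabilizer of $e$ as the automorphism group of the rooted tree obtained by subdividing $e$ at a new root vertex, so that Theorem~\ref{thm:fb-rooted} applies verbatim, whereas you prove the compactness of that stabilizer directly; the two formulations are interchangeable.
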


\begin{proof}
  We will use the observation in \cite[Section~5]{gawron} (together with an additional point of care) that any automorphism of $T$ that inverts an edge can be regarded as an automorphism of a related rooted tree.
  
  In detail, for $e,e'\in T$ let $e\equiv e'$ if they are conjugate, that is, if there is $\alpha\in\Aut(T)$ such that $e'=\alpha(e)$. For the rest of the proof we fix a set $S$ of equivalence class representatives for the $\equiv$ relation. For each edge $e\in S$ we let $T_e$ be the tree obtained by inserting a vertex $x_e$ into the middle of the edge $e$. We regard $T_e$ as a rooted tree with root $x_e$.

  Now if $\phi$ inverts the edge $e$, then $\phi$ induces an automorphism $\hat\phi$ of $T_{e_0}$ where $e_0$ is the $\equiv$-representative of $e$. Then it is not difficult to check that $\phi\mapsto\hat\phi$ is a reduction from conjugacy of automorphisms of $T$ of type~(a) to the disjoint union over $e\in S$ of the conjugacy relations on $\Aut(T_e)$. Since the trees $T_e$ are rooted, it follows from Theorem~\ref{thm:fb-rooted} that the conjugacy relation on $\Aut(T)$ is smooth.
  
  Lastly we comment that we can arrange for the map $\phi\mapsto\hat\phi$ to be Borel. To accomplish, this we need only fix in advance the countable representative set $S$ together with a countable family of automorphisms $\alpha_e$ such that for each edge $e$ of $T$, $\alpha_e$ maps $e$ to its representative in $S$.
\end{proof}

For the remainder of the section, we confine ourselves to the case when $T$ is a regular unrooted tree. For each $n$ such that $2\leq n\leq\omega$ we let $R_n$ denote a fixed copy of the unique unrooted tree such that every vertex has degree exactly $n$.

We can already fully describe the complexity of the conjugacy problem for automorphisms of type~(a) for each of the trees $R_n$. The conjugacy relation for automorphisms of $R_2$ of type~(a) clearly has a single equivalence class, in other words, it is Borel bireducible with $\Delta(1)$. Next, the conjugacy problem for automorphisms of $R_n$ of type~(a) for $3\leq n<\omega$ is Borel bireducible with $\Delta(2^\omega)$. Indeed, the $\leq_B$ direction was just shown in Proposition~\ref{prop:typea}. For the reverse reduction, using the terminology of the remark following Theorem~\ref{thm:fb-rooted}, there are continuum many orbit trees and one can simply find an automorphism to represent each. Finally, the conjugacy problem for automorphisms of $R_\omega$ of type~(a) is Borel complete. Here one can again use a construction from orbit trees, together with the previously stated fact that the classification of all countable rooted trees is Borel complete.

The next result records the complexity of the conjugacy problem for automorphisms of $R_n$ of type~(b).

\begin{proposition}
  \label{prop:typeb}
  Let $2\leq n\leq\omega$. The conjugacy problem for automorphisms of $R_n$ of type~(b) is smooth. In fact it is Borel bireducible with $\Delta(\omega)$.
\end{proposition}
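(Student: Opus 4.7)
The strategy is to identify the translation length as the unique complete conjugacy invariant taking values in $\omega$, and then to exhibit a representative of each value. For any type~(b) automorphism $\phi$ of $R_n$, the integer $k(\phi) := \min_{v \in R_n} d(v, \phi(v))$ is the translation length; it is attained precisely on the axis and is manifestly preserved under conjugation (which is an isometry of $R_n$). The map $\phi\mapsto k(\phi)$ is evidently Borel, yielding the reduction $\leq_B \Delta(\omega)$ as soon as completeness is established. Since $k$ takes countably many values, this already delivers smoothness.

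To show that $k$ is a complete invariant, I would suppose $\phi, \psi$ are type~(b) automorphisms with $k(\phi) = k(\psi) = k$, with respective axes $(x_i)_{i\in\Z}$ and $(y_i)_{i\in\Z}$, and construct a conjugator in two stages. First, using the high symmetry of $R_n$, define $\beta \in \Aut(R_n)$ by setting $\beta(x_i) = y_i$ along the axis and extending recursively over the hanging subtrees at each axis vertex via arbitrary isomorphisms of rooted trees onto the corresponding hanging subtrees at $y_i$. After replacing $\phi$ by $\beta\phi\beta^{-1}$, I may assume $\phi$ and $\psi$ share the axis $(y_i)$ and act identically on it. Second, I would conjugate by an automorphism $\gamma$ fixing the axis pointwise to match the off-axis behavior. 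For $n=2$ there is nothing to do. For $n\geq 3$, removing the axis leaves a disjoint union of rooted trees (the hanging trees), each isomorphic to a fixed $H$: the $(n-1)$-regular rooted tree for finite $n$, or $\omega^{<\omega}$ for $n=\omega$. Partition the hanging trees into $\phi$-orbits; each orbit is a bi-infinite sequence $(H_j)_{j\in\Z}$ with isomorphisms $\phi_j : H_j \to H_{j+1}$, and similarly $\psi$ induces orbits with maps $\psi_j$. Pair the $\phi$-orbits with $\psi$-orbits so that orbits meeting $\{y_0, \ldots, y_{k-1}\}$ at the same axis vertex correspond, then on each matched pair choose an arbitrary isomorphism $\gamma_0 : H_0 \to H'_0$ and define $\gamma_{j+1} = \psi_j \gamma_j \phi_j^{-1}$ (and analogously for $j<0$). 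Assembling the $\gamma_j$ over all orbits produces the desired $\gamma$.

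The main technical obstacle is verifying that this orbit-by-orbit construction assembles into a genuine automorphism of $R_n$: one must match hanging trees at each axis vertex bijectively. This is possible because $\phi$ and $\psi$ each permute the (constant-size) collection of hanging trees at any given axis vertex, so a compatible matching of their orbits exists, and any initial free choices then propagate coherently by the recursion. Finally, for the reverse reduction $\Delta(\omega) \leq_B$ conjugacy on type~(b), I would exhibit for each $k\geq 1$ an explicit automorphism $\phi_k$ translating its axis by $k$ and acting by the identity on each hanging-tree orbit (via a fixed identification with the standard $H$); the resulting map $k \mapsto \phi_k$ is Borel and sends distinct values of $k$ to non-conjugate automorphisms by the invariance of $k$, completing the Borel bireducibility with $\Delta(\omega)$.
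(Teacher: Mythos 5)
Your proof is correct, but it takes a genuinely different route from the paper. The paper disposes of this proposition in two sentences by citing \cite[Theorem~5.3]{gawron}, which classifies type~(b) automorphisms of an \emph{arbitrary} tree up to conjugacy by the amplitude of the translation together with the conjugacy class of the bi-infinite path; since $R_n$ has a single conjugacy class of bi-infinite paths, only the amplitude survives. You instead reprove the relevant special case from scratch: the two-stage conjugator (first align the axes, then propagate isomorphisms of the hanging $(n-1)$-ary rooted trees around the $\phi$- and $\psi$-orbits via $\gamma_{j+1}=\psi_j\gamma_j\phi_j^{-1}$) is exactly the right construction, and you correctly flag and resolve the one genuine issue, namely that the orbit-by-orbit choices assemble into an automorphism because the $k(n-2)$ orbit representatives at $y_0,\dots,y_{k-1}$ can be matched vertex-by-vertex. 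One point you handle only implicitly is the direction of translation: if $\phi$ and $\psi$ translate their axes in ``opposite'' directions, this is absorbed by your freedom in indexing $(x_i)$ so that $\phi(x_i)=x_{i+k}$, which is fine but worth a sentence. What your approach buys is self-containedness and an explicit picture of why regularity collapses the invariant to a single natural number; what the paper's approach buys is brevity and the observation that the same citation immediately handles non-regular trees (as it is reused for $T_{\mathbb Z}$ in Proposition~\ref{prop:tz}, where the uniqueness of the bi-infinite path again makes the second invariant trivial).
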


\begin{proof}
  This is a direct corollary of \cite[Theorem~5.3]{gawron}, which states that an automorphism of type~(b) is determined up to conjugacy by the ``amplitude'' of the translation, that is, the minimum value of $k$ in the definition of type~(b), together with the conjugacy class of the bi-infinite path of the translation. Since there is just one conjugacy class of bi-infinite paths of $R_n$, the result follows.
  %
  %
  %
  %
\end{proof}

We now turn to the automorphisms of $R_n$ of type~(c).

\begin{theorem}
  \label{thm:e-infty}
  If $3\leq n<\omega$ then the conjugacy problem for automorphisms of $R_n$ of type (c) is Borel bireducible with $E_\infty$.
\end{theorem}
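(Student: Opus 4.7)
The plan is to prove both bounds separately, relying on the orbit tree invariant introduced in the remark following Theorem~\ref{thm:fb-rooted}.

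For the upper bound $\leq_B E_\infty$, I would extend the orbit tree construction to the unrooted setting. Given a type~(c) automorphism $\phi$ of $R_n$, define the labeled orbit tree $\OT_\phi$ to have the $\phi$-orbits as vertices, with two orbits joined by an edge whenever some representatives are adjacent in $R_n$, and each vertex labeled by its cardinality. Because $\phi$ has no inverted edge (being of type~(c)), the orbit graph is a tree, and because $R_n$ is $n$-regular, $\OT_\phi$ is locally finite of maximum degree $n$. The argument from the remark adapts to show that $\phi\mapsto\OT_\phi$ is a Borel reduction from type~(c) conjugacy to the isomorphism relation on countable connected locally finite labeled trees. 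Each isomorphism class in that space is countable (a connected locally finite tree is determined up to rooted isomorphism by a choice of base vertex, and there are only countably many such choices in a countable tree), so the relation is countable Borel and hence Borel reduces to $E_\infty$ by universality.

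For the lower bound $E_\infty\leq_B$, I would reduce a known $E_\infty$-complete equivalence relation, namely the isomorphism relation on countable connected locally finite trees of bounded degree $d\geq 3$, to type~(c) conjugacy. Given such a tree $\tau$ with $d\leq n$, embed it canonically as a subtree of $R_n$ and define $\phi_\tau\in\Aut(R_n)$ to fix $\tau$ pointwise and to act on each complementary branch by a canonical rooted automorphism, chosen rigid enough that the conjugacy class of $\phi_\tau$ is determined by the embedded copy of $\tau$. Then $\phi_\tau$ is of type~(c), and arguments patterned on the proof of Theorem~\ref{thm:full-tree-complete} show that $\tau\cong\tau'$ iff $\phi_\tau$ is conjugate to $\phi_{\tau'}$: a conjugator must carry the fixed set of $\phi_\tau$ to the fixed set of $\phi_{\tau'}$, and conversely any isomorphism of fixed sets extends to a conjugation by extending equivariantly on the branches.

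The main obstacle will be verifying the construction in the tightest case $n=3$, where subtrees of $R_n$ have maximum degree only $3$ and the complementary branches have limited automorphism structure. The branch automorphisms must be chosen carefully so that the fixed set of $\phi_\tau$ is recoverable, up to isomorphism, from the conjugacy class of $\phi_\tau$ alone, and so that the resulting assignment is Borel. Once these combinatorial details are verified, both directions of the bireducibility follow.
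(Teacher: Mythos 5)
Your upper bound is a legitimate alternative to the paper's argument: the paper simply observes that $\Aut(R_n)$ is locally compact (stabilizers of finite sets are compact) and invokes the theorem of Kechris that orbit equivalence relations of locally compact Polish group actions reduce to $E_\infty$, while you propose the combinatorial route via labeled orbit trees that the paper only alludes to in passing. That route is workable (root the orbit tree at a label-$1$ vertex, i.e.\ a fixed point, and run the inductive back-and-forth from the rooted remark), but it carries real verification burden that the local compactness argument avoids, namely that the unrooted labeled orbit tree is a complete invariant for type~(c) conjugacy and that isomorphism of connected locally finite labeled trees is essentially countable.

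The lower bound, however, has a genuine gap. You reduce from isomorphism of \emph{all} connected trees of degree at most $d$, and this class is too large for your construction. The problem is any vertex $x$ of $\tau$ whose degree in $\tau$ is exactly $n-1$: it has a unique neighbor $y$ outside $\tau$, and since $\phi_\tau$ fixes $x$ and all of $x$'s neighbors in $\tau$, it must fix $y$ as well. So the fixed-point set of $\phi_\tau$ is not $\tau$ but $\tau$ with a pendant leaf grafted onto every vertex of degree $n-1$, and this operation is not injective on isomorphism types. Concretely, for $n=3$ take $\tau$ to be the bi-infinite path and $\tau'$ to be the caterpillar (the path with one leaf attached to each vertex); both have maximum degree $\leq 3$, both yield automorphisms whose fixed set is the caterpillar and which swap the two branches hanging off each pendant vertex, so $\phi_\tau$ and $\phi_{\tau'}$ are conjugate while $\tau\not\cong\tau'$. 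Your reduction therefore fails as stated. The paper's fix is exactly its Lemma~\ref{lem:onethree}: one must first show (via the widget coding of the Cayley graph of $\F_2$) that $E_\infty$ is already bireducible with isomorphism of trees all of whose vertices have degree $1$ or $n$. For such trees the full-degree vertices acquire no complementary branches and each leaf acquires $n-1\geq 2$ of them, which can be cyclically permuted with no extra fixed points, so the fixed set is exactly $\tau$ and the argument of Theorem~\ref{thm:full-tree-complete} goes through. This degree restriction is not a routine detail to be "verified carefully" at the end; it is a separate reduction that must be proved before your construction can start.
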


Here we recall that $E_\infty$ is the orbit equivalence relation induced by the free group on two generators $\F_2$ acting by left-translation on its power set $\mathcal P(\F_2)$ (with the product topology of $2^{\F_2}$). The relation $E_\infty$ is universal among countable Borel equivalence relations, that is, Borel equivalence relations where each equivalence class is countable. It is well-known (see \cite{jackson-kechris-louveau}) that $E_\infty$ is Borel bireducible with the isomorphism relation on the class of finitely branching trees. Before proving Theorem~\ref{thm:e-infty}, we need the following refinement of this latter result.

\begin{lemma}
  \label{lem:onethree}
  $E_\infty$ is Borel bireducible with the isomorphism relation on the class of countable trees with the property that every vertex has degree $1$ or $3$.
\end{lemma}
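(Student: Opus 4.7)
The plan is to prove the two directions of bireducibility separately. For the upper bound $\leq_B E_\infty$, the key fact is that isomorphism on connected locally finite graphs, and in particular on $\{1,3\}$-trees, is a countable Borel equivalence relation. Indeed, rooted isomorphism on such trees is smooth via a canonical breadth-first enumeration, and each unrooted isomorphism class is the union indexed by the countably many choices of root of a singleton rooted isomorphism class, hence countable. The bound then follows immediately from the universality of $E_\infty$ among countable Borel equivalence relations.

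For the lower bound $E_\infty \leq_B {\cong}$, I would realize $E_\infty$ as the orbit equivalence relation induced by the left-translation action of the group $G = \Z/2 * \Z/2 * \Z/2$ on $2^G$. This group is nonamenable (it contains a copy of $\F_2$, generated for instance by $ab$ and $ac$), so by standard universality results its translation action's orbit equivalence relation is Borel bireducible with $E_\infty$. The Cayley graph of $G$ with respect to the three order-$2$ generators $a,b,c$ is the $3$-regular tree. Given $x\in 2^G$, I construct a $\{1,3\}$-tree $T_x$ from the Cayley graph by subdividing each edge $e=\{u,v\}$ into a path $u - m^u_e - m^v_e - v$ of length $3$, and then attaching to each midpoint $m^w_e$ a rigid $\{1,3\}$-subtree $\Gamma_{s(e),\,x(w)}$ depending on both the generator label $s(e)\in\{a,b,c\}$ of the edge and the bit $x(w)\in\{0,1\}$ of the nearer endpoint. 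The six signature gadgets $\Gamma_{s,b}$ are chosen to be pairwise non-isomorphic rigid trees, each with a distinguished attachment vertex of degree $2$ so that after attachment the midpoint has degree $3$; explicit examples may be taken to be small caterpillars of varying length and decoration.

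The verification that $x$ and $y$ lie in the same $G$-orbit iff $T_x \cong T_y$ proceeds as follows. If $gx = y$, then left-translation by $g$ is a generator-preserving automorphism of the Cayley graph carrying the $x$-labeling to the $y$-labeling, and it extends canonically to an isomorphism $T_x \to T_y$ by acting equivariantly on subdivision vertices and signature gadgets. Conversely, any isomorphism $\phi\colon T_x \to T_y$ must preserve the \emph{type} of each vertex, i.e., whether it is a Cayley vertex, a subdivision midpoint, or a gadget-interior vertex, since these types are distinguished by local combinatorial features of the tree. Hence $\phi$ descends to a generator-preserving and bit-preserving isomorphism of the labeled Cayley graphs of $(G,x)$ and $(G,y)$, and since the automorphism group of the generator-labeled Cayley graph of $G$ is exactly $G$ acting by left translation, we recover some $g\in G$ with $gx=y$. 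The main technical obstacle is the careful specification of the signature gadgets so that they are pairwise distinguishable and rigid, their attachment vertices are canonically recoverable from the unlabeled tree, and Cayley vertices, midpoints, and gadget-interior vertices are mutually distinguishable by local combinatorics; once these gadgets are fixed, the remaining verifications are routine.
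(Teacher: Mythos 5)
Your proposal is correct in outline but takes a genuinely different route from the paper's. For the direction $E_\infty\leq_B{\cong}$, the paper starts from the shift action of $\F_2$ on $\mathcal P(\F_2)$ and codes the \emph{directed, labeled} Cayley graph of $\F_2$ (which is $4$-regular) into a $\{1,3\}$-tree by replacing each vertex and each directed labeled edge with an explicit widget; recovering the direction and the two generator labels is precisely what the widgets are for. You instead begin with $G=\Z/2*\Z/2*\Z/2$, whose Cayley graph is already the $3$-regular tree, so you only need to encode edge labels and vertex bits, which you do by subdividing edges and hanging gadgets on the midpoints. That is a clean simplification of the combinatorics, but it shifts work into group theory: you must know that the shift action of $G$ on $2^G$ induces a relation bireducible with $E_\infty$. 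This is true, but the reason is not nonamenability (it is not known that nonamenability of $G$ suffices for universality of $E(G,2^G)$ in the Borel setting); the correct justification is the one in your parenthetical, namely $\F_2\leq G$, combined with the standard facts that $E(H,2^H)\leq_B E(G,2^G)$ for $H\leq G$ and that $E(\F_2,2^{\F_2})$ is universal, both found in Jackson--Kechris--Louveau. For the direction ${\cong}\leq_B E_\infty$ the paper simply quotes the known bireducibility of $E_\infty$ with isomorphism of finitely branching trees, whereas you give a direct essential-countability argument; note that as literally stated (``isomorphism on connected locally finite graphs is a countable Borel equivalence relation'') your claim is false, since isomorphism classes in the coding space are uncountable $S_\infty$-orbits --- what your rooted-code argument actually shows is that the relation is \emph{essentially} countable, i.e.\ Borel reducible to a countable Borel equivalence relation, which is exactly what is needed. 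Finally, like the paper, you leave the verification that the coding structures (gadgets, midpoints, Cayley vertices) are locally recoverable from the bare tree as an assertion; this is the same level of detail the paper adopts with the widgets of Figure~\ref{fig:construction}, but it is the one place where your six gadgets must actually be exhibited and checked, so the proof is not complete until that is done.
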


\begin{proof}
  It suffices to give a Borel reduction from the orbit equivalence relation of $\F_2$ acting on $\mathcal P(\F_2)$ to the isomorphism equivalence relation on countable trees with degree $1$ or $3$. For this we use a technical modification of the argument from \cite{jackson-kechris-louveau} mentioned above.

	Given a subset $S$ of $\F_2$, we construct a tree $T_S$ as follows. To begin, denote the generators of $\F_2$ by $a,b$, and let $\Gamma$ be the corresponding directed, labeled Cayley graph.	We define $T_S$ by replacing the vertices and directed edges of $\Gamma$ with the widgets shown in Figure~\ref{fig:construction}.
  
  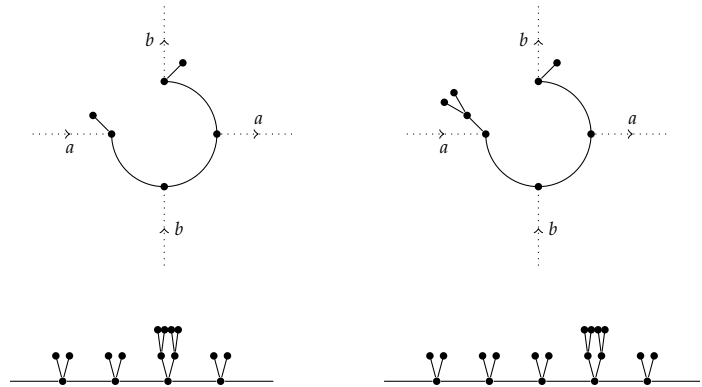
\begin{figure}[ht]
    \begin{center}
      \begin{tikzpicture}[vertex/.style={circle,fill,inner sep=1pt},scale=.7,baseline=0]
        \node[vertex] (a) at (-1,0) {};
        \node[vertex] (b) at (0,-1) {};
        \node[vertex] (c) at (1,0) {};
        \node[vertex] (d) at (0,1) {};
        \draw (-1,0) arc[radius=1,start angle=-180,end angle=90];
        \draw (a) -- +(135:.5) node[vertex] {};
        \draw (d) -- +(45:.5) node[vertex] {};
        \draw[->-,dotted] (-2.5,0)--node[below]{\tiny$a$} (a);
        \draw[->-,dotted] (0,-2.5)--node[right]{\tiny$b$} (b);
        \draw[->-,dotted] (c)--node[above]{\tiny$a$} (2.5,0);
        \draw[->-,dotted] (d)--node[left]{\tiny$b$} (0,2.5);
      \end{tikzpicture}
      \hspace{.5in}
      \begin{tikzpicture}[vertex/.style={circle,fill,inner sep=1pt},scale=.7,baseline=0]
        \node[vertex] (a) at (-1,0) {};
        \node[vertex] (b) at (0,-1) {};
        \node[vertex] (c) at (1,0) {};
        \node[vertex] (d) at (0,1) {};
        \draw (-1,0) arc[radius=1,start angle=-180,end angle=90];
        \draw (a) -- +(135:.5) node[vertex] (a1) {};
        \draw (d) -- +(45:.5) node[vertex] {};
        \draw (a1) -- +(120:.5) node[vertex] {};
        \draw (a1) -- +(150:.5) node[vertex] {};
        \draw[->-,dotted] (-2.5,0)--node[below]{\tiny$a$} (a);
        \draw[->-,dotted] (0,-2.5)--node[right]{\tiny$b$} (b);
        \draw[->-,dotted] (c)--node[above]{\tiny$a$} (2.5,0);
        \draw[->-,dotted] (d)--node[left]{\tiny$b$} (0,2.5);
      \end{tikzpicture}
      \vspace{.3in}

      \begin{tikzpicture}[vertex/.style={circle,fill,inner sep=1pt},scale=.7,baseline=0]
        \node[vertex] (a) at (0,0) {};
        \node[vertex] (b) at (1,0) {};
        \node[vertex] (c) at (2,0) {};
        \node[vertex] (d) at (3,0) {};
        \draw (a) -- +(75:.5) node[vertex] {};
        \draw (a) -- +(105:.5) node[vertex] {};
        \draw (b) -- +(75:.5) node[vertex] {};
        \draw (b) -- +(105:.5) node[vertex] {};
        \draw (c) -- +(75:.5) node[vertex] (c1) {};
        \draw (c) -- +(105:.5) node[vertex] (c2) {};
        \draw (d) -- +(75:.5) node[vertex] {};
        \draw (d) -- +(105:.5) node[vertex] {};
        \draw (c1) -- +(82.5:.5) node[vertex] {};
        \draw (c1) -- +(97.5:.5) node[vertex] {};
        \draw (c2) -- +(82.5:.5) node[vertex] {};
        \draw (c2) -- +(97.5:.5) node[vertex] {};
        \draw (-1,0)--(a)--(b)--(c)--(d)--(4,0);
      \end{tikzpicture}
      \hspace{.5in}
      \begin{tikzpicture}[vertex/.style={circle,fill,inner sep=1pt},scale=.7,baseline=0]
        \node[vertex] (o) at (-1,0) {};
        \node[vertex] (a) at (0,0) {};
        \node[vertex] (b) at (1,0) {};
        \node[vertex] (c) at (2,0) {};
        \node[vertex] (d) at (3,0) {};
        \draw (o) -- +(75:.5) node[vertex] {};
        \draw (o) -- +(105:.5) node[vertex] {};
        \draw (a) -- +(75:.5) node[vertex] {};
        \draw (a) -- +(105:.5) node[vertex] {};
        \draw (b) -- +(75:.5) node[vertex] {};
        \draw (b) -- +(105:.5) node[vertex] {};
        \draw (c) -- +(75:.5) node[vertex] (c1) {};
        \draw (c) -- +(105:.5) node[vertex] (c2) {};
        \draw (d) -- +(75:.5) node[vertex] {};
        \draw (d) -- +(105:.5) node[vertex] {};
        \draw (c1) -- +(82.5:.5) node[vertex] {};
        \draw (c1) -- +(97.5:.5) node[vertex] {};
        \draw (c2) -- +(82.5:.5) node[vertex] {};
        \draw (c2) -- +(97.5:.5) node[vertex] {};
        \draw (-2,0)--(o)--(a)--(b)--(c)--(d)--(4,0);
      \end{tikzpicture}      
      \caption{Structures in $T_S$ that code the vertices and directed, labeled edges of $\Gamma$. Top left: a vertex of $\Gamma$ not in $S$. Top right: a vertex of $\Gamma$ in $S$. Bottom left: a left-to-right directed edge in $\Gamma$ labeled $a$. Bottom right: a left-to-right directed edge in $\Gamma$ labeled $b$.}
      \label{fig:construction}
    \end{center}
  \end{figure}

  It is evident from the diagrams that every vertex of $T_S$ has degree $1$ or $3$. Moreover it is not difficult to verify that the vertices and directed labeled edges of $\Gamma$ may be recovered from the structure of $T_S$. For example, the only place one can find two adjacent vertices, each not adjacent to a leaf, is in a structure coding a vertex of $\Gamma$. Moreover there are two such pairs if the coded vertex lies in $S$. From here it is possible to identify the position of each element of the coding structure.
	  
	We claim that the mapping $S\to T_S$ gives the desired reduction. Clearly if there exists $g\in\F_2$ such that $gS=S'$, then $g$ induces an isomorphism $T_S\cong T_{S'}$. Conversely if $\alpha\colon T_S\to T_{S'}$ is an isomorphism, then the above remarks imply that $\alpha$ preserves the coding structures and therefore induces an automorphism of $\Gamma$ which carries $S$ to $S'$. Since the only automorphisms of a directed, labeled Cayley graph are the translations, we conclude that there exists $g\in\F_2$ such that $gS=S'$, as desired.
\end{proof}

\begin{remark}
  Using extra leaves in the coding structures, one can similarly show that for any $n<\omega$, $E_\infty$ is Borel bireducible with the isomorphism relation on countable trees such that every vertex has degree $1$ or $n$. This is carried out in detail in \cite{kyle-thesis}.
\end{remark}

We are now ready to complete the proof that the conjugacy relation on type~(c) elements of $\Aut(R_n)$ is Borel bireducible with $E_\infty$.

\begin{proof}[Proof of Theorem~\ref{thm:e-infty}]
  To see that the conjugacy relation on the type~(c) elements of $\Aut(R_n)$ is Borel reducible to $E_\infty$, first note that for $n<\omega$ the group $\Aut(R_n)$ is locally compact. Indeed a neighborhood basis at the identity consists of the pointwise stabilizers of finite sets, and it is easy to see from the arguments of Theorem~\ref{thm:fb-rooted} that such stabilizers are compact. We may now appeal to the well-known result from \cite{kechris-sections} which implies that any orbit equivalence relation induced by a continuous action of a locally compact group is Borel reducible to $E_\infty$. We remark that it is also possible to produce a combinatorial reduction using the methods of \cite[Section~5]{gawron}.

  For the reverse direction, by Lemma~\ref{lem:onethree} and the remark following it, it suffices to find a Borel reduction from the isomorphism relation on countable trees such that every vertex has degree $1$ or $n$ to the conjugacy relation on $\Aut(R_n)$. For this, we will use a construction similar to Theorem~\ref{thm:full-tree-complete}.
  
  Let $T$ be a given tree such that every vertex has degree $1$ or $n$. We embed $T$ into a copy $C_T$ of $R_n$ by adding a copy of $(n-1)^{<\omega}$ to each leaf of $T$. We then construct an automorphism $\phi_T$ of $C_T$ as follows. Fix in advance an automorphism $\sigma$ of $(n-1)^{<\omega}$ which moves every point except the root. Then let $\phi_T$ be the automorphism which fixes each $x\in T$ and acts by $\sigma$ on each $T_x$. One can then argue just as in the proof of Theorem~\ref{thm:full-tree-complete} that $T\cong T'$ if and only if $\phi_T$ is conjugate to $\phi_{T'}$.
\end{proof}

We remark that the conjugacy relation on automorphisms of $R_2$ of type~(c) has just two equivalence classes; formally it is Borel bireducible with $\Delta(2)$. We conclude this section by recording the complexity in the infinitely branching case.

\begin{theorem}
  The conjugacy problem for automorphisms of $R_\omega$ of type~(c) is Borel complete.
\end{theorem}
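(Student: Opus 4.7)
The plan is to reduce the isomorphism relation on countable rooted trees (which is Borel complete by Friedman--Stanley) to the conjugacy relation on type~(c) elements of $\Aut(R_\omega)$. The construction is a variant of that of Theorem~\ref{thm:full-tree-complete}, with an extra device to mark the root of $T$ within the unrooted tree $R_\omega$.

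Given a countable rooted tree $T$ with root $r$, I would first embed $T$ into a copy $C_T$ of $R_\omega$ by grafting $\omega$ fresh copies of $\omega^{<\omega}$ at each vertex of $T$ via their roots. Every vertex of $T$ then has degree $\omega$ in $C_T$, so $C_T\cong R_\omega$. I then define $\phi_T\in\Aut(C_T)$ to fix $T$ pointwise, and on the $\omega$ grafted copies at each $v\in T$ to act as follows. At every non-root vertex $v$, partition the copies into $\omega$ pairs and have $\phi_T$ swap the two copies of each pair; at the root $r$, partition the copies into $\omega$ triples and have $\phi_T$ cyclically permute the three copies of each triple. The induced action on the deeper vertices of each copy is given by a canonical identification between copies in the same orbit. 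Since every vertex of a grafted copy is moved by $\phi_T$, the fixed subtree of $\phi_T$ is precisely $T$, and in particular $\phi_T$ is of type~(c).

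I claim the Borel map $T\mapsto\phi_T$ is a reduction. The forward direction is the easier: any rooted isomorphism $\beta\colon T\to T'$ extends to an isomorphism $\alpha\colon C_T\to C_{T'}$ by matching the copies at $v$ with those at $\beta(v)$ in a way respecting the pair and triple structures, and this $\alpha$ conjugates $\phi_T$ to $\phi_{T'}$. For the reverse direction, any conjugating $\alpha\colon C_T\to C_{T'}$ carries the fixed subtree of $\phi_T$ to that of $\phi_{T'}$, yielding an unrooted isomorphism $T\to T'$. Moreover $\alpha$ preserves the cycle structure of $\phi_T$ on the non-fixed neighbors of each fixed vertex, which by construction consists of $2$-cycles at non-root vertices and of $3$-cycles at $r$. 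Hence $\alpha(r)=r'$, so $\alpha$ is in fact a rooted isomorphism.

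The main conceptual obstacle is that, in contrast to the setting of Theorem~\ref{thm:full-tree-complete}, the fixed subtree of $\phi_T$ in $R_\omega$ only determines $T$ as an unrooted tree. Using different cycle types at the root versus non-root vertices resolves this by forcing any conjugating automorphism to send root to root.
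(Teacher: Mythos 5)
Your proof is correct, but it takes a different route from the paper's. The paper starts from the isomorphism relation on countable \emph{unrooted} trees, asserts its Borel completeness, embeds such a tree $T$ into $R_\omega$ so that every vertex of $T$ has infinitely many neighbors outside $T$, and builds an automorphism whose fixed-point set is exactly $T$; no root ever needs to be recovered, so the verification is word-for-word the same as in Theorem~\ref{thm:full-tree-complete}. You instead start from the isomorphism relation on countable \emph{rooted} trees --- the form of the Friedman--Stanley result actually cited earlier in the paper --- and compensate for the loss of the root in the unrooted ambient tree $R_\omega$ by encoding it into the dynamics: $2$-cycles of grafted copies at non-root vertices versus $3$-cycles at the root, so that any conjugating automorphism, which must preserve orbit sizes among the non-fixed neighbors of each fixed vertex, is forced to send root to root. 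Your argument is sound (the degree count showing $C_T\cong R_\omega$ is right, the fixed subtree is exactly $T$, and a fixed vertex rules out types~(a) and~(b), so $\phi_T$ is of type~(c)). What each approach buys: the paper's is shorter on the page but leans on the Borel completeness of unrooted tree isomorphism, a fact it does not prove and which is itself usually obtained from the rooted case by a root-marking gadget; your version internalizes that gadget into the automorphism construction, making the reduction self-contained modulo the rooted Friedman--Stanley theorem, at the cost of the extra bookkeeping with cycle types.
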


\begin{proof}
  The isomorphism relation on countable unrooted trees is Borel complete, so it suffices to find a Borel reduction from the isomorphism relation on such trees to the conjugacy relation on the type~(c) elements of $\Aut(R_\omega)$. For this we again adapt the proof of Theorem~\ref{thm:full-tree-complete}. Given a countable unrooted tree $T$, we embed it in $R_\omega$ in such a way that every vertex $x\in T$ has infinitely many neighbors not in $T$. We then build in a uniform way an automorphism $\phi_T$ of $R_\omega$ whose fixed point set is exactly $T$. We then verify as before that $T\cong T'$ iff $\phi_T$ is conjugate to $\phi_{T'}$.
\end{proof}

\section{Further examples}

In this section we broaden our study to include trees which are not regularly branching. In doing so, we will see some phenomena that did not occur in the case of regular trees. For instance, all of the conjugacy problems we have seen so far have the complexity of $\Delta(2^\omega)$ or lower, $E_\infty$, or are Borel complete. Our examples below will exhibit several additional levels in the Borel complexity hierarchy.

Another theme in previous sections is an apparent coincidence between the complexity of the conjugacy problem for $\Aut(T)$ and the complexity of the isomorphism classification of subtrees of $T$. For instance, the isomorphism classification of finitely branching rooted trees is smooth, and the isomorphism classification of arbitrarily branching trees is Borel complete (see \cite{friedman-stanley}). These facts parallel the results of Section~2. Similarly the complexity of the isomorphism classification of finitely branching unrooted trees is $E_\infty$ (see \cite{jackson-kechris-louveau}), which parallels the results of Section~3. Again, our examples below will exhibit a departure from this pattern.

In the next result, we consider the conjugacy problem for automorphisms of the rooted trees $\omega^{\leq n}$, that is, the full rooted trees of given bounded height $n$.

In order to state the result, we recall that for an equivalence relation $E$ on a standard Borel space $X$, the \emph{jump} of $E$, denoted $E^+$, is defined on the space $X^\omega$ by:
\[x\mathrel{E}^+x'\iff \{[x(n)]_E:n\in\omega\}=\{[x'(n)]_E:n\in\omega\}
\]
We further let $E^{+0}=E$ and let $E^{+n}=(E^{+n-1})^+$ for $n>0$. By \cite{friedman-stanley}, the sequence of equivalence relations $\Delta(2^\omega)^{+n}$ increases strictly in Borel complexity.

\begin{proposition}
  For all $n\geq1$, the conjugacy problem for automorphisms of $\omega^{\leq n}$ has the complexity $\Delta(2^\omega)^{+n-1}$.
\end{proposition}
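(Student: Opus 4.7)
My plan is to induct on $n$, using the labeled orbit tree construction $\OT_\phi$ from the remark following Theorem~\ref{thm:fb-rooted}. For $\phi\in\Aut(\omega^{\leq n})$ the orbit tree is a labeled rooted tree of height $n$, each vertex carrying as its label the cardinality of the corresponding $\phi$-orbit (an element of $\omega\cup\{\omega\}$). As noted in the paragraph preceding Theorem~\ref{thm:full-tree-complete}, orbit trees continue to classify conjugacy in the infinitely branching setting, so $\phi$ and $\psi$ are conjugate iff $\OT_\phi\cong\OT_\psi$ as labeled rooted trees. It thus suffices to show that this isomorphism relation has complexity $\Delta(2^\omega)^{+n-1}$.

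For the upper bound I would show by induction that isomorphism of labeled rooted trees of height $n$ Borel reduces to $\Delta(2^\omega)^{+n-1}$. The case $n=1$ is just classifying a countable multiset of $(\omega\cup\{\omega\})$-labels, which is smooth. For the inductive step, a height-$n$ tree is determined up to isomorphism by its root label together with the multiset of isomorphism types of the height-$(n-1)$ subtrees hanging off the root; by induction these subtrees take invariants in a space classified by $\Delta(2^\omega)^{+n-2}$, and passing to a countable multiset (which is Borel bireducible with passing to a countable set) yields $\Delta(2^\omega)^{+n-1}$.

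For the lower bound I would construct inductively a Borel reduction of $\Delta(2^\omega)^{+n-1}$ to the conjugacy relation on $\Aut(\omega^{\leq n})$. The base case $n=1$ encodes a real $r\in 2^\omega$ as a permutation $\pi_r$ whose cycle type codes $r$; for example, $\pi_r$ has one cycle of length $p_k$ (the $k$-th prime) exactly when $r(k)=1$, together with infinitely many fixed points, so that distinctness of primes guarantees $r$ is recovered from the conjugacy class of $\pi_r$. For the inductive step, fix a bijection $\omega\to\omega\times\omega$, $m\mapsto\langle i_m,j_m\rangle$, and given $\bar x=(x_m)_m$ in the domain of $\Delta(2^\omega)^{+n-1}$ define $\phi_{\bar x}\in\Aut(\omega^{\leq n})$ to fix every level-$1$ vertex $v_m$ and to act on the subtree rooted at $v_m$ (canonically identified with $\omega^{\leq n-1}$) as the previously constructed $\phi^{(n-1)}_{x_{i_m}}$.

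The main obstacle is the mismatch between the set-based equivalence defining $E^+$ and the multiset-based equivalence produced by orbit trees. The resolution, visible in both halves, is to arrange that every encoded class appears with infinite multiplicity: the pairing trick in the lower bound ensures each $[\phi^{(n-1)}_{x_i}]$ appears $\omega$ times among the level-$1$ orbits, so the resulting multiset of subtree data is determined by its underlying set of conjugacy classes, which in turn corresponds to $\bar x$ up to $\Delta(2^\omega)^{+n-1}$. The remaining verifications — Borelness of the coding maps and the bi-implications — are then routine bookkeeping.
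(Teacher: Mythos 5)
Your argument is correct, and at its core it runs on the same engine as the proof in the paper: fix the root, decompose an automorphism of $\omega^{\leq n}$ into the family of automorphisms it induces on the subtrees hanging off the root's children, and observe that conjugacy then becomes a jump of the conjugacy relation one level down. The difference is one of execution. The paper's outline delegates the inductive step to (the proof of) Theorem~3.2 of \cite{ce1}, whereas you make the whole induction self-contained by routing everything through the labeled orbit tree invariant $\OT_\phi$. That buys a cleaner, citation-free argument, but it also means two assertions that the paper gets for free now carry real weight in your write-up and should be proved rather than mentioned. First, the completeness of the labeled orbit tree invariant for the \emph{infinitely} branching trees $\omega^{\leq n}$: the remark in Section~2 only treats the finitely branching case, so you need the level-by-level back-and-forth explicitly; it works because the number of children of a vertex $x\in O$ lying in a given child orbit $O'$ is determined by the labels $|O|$ and $|O'|$, so a label-preserving isomorphism of orbit trees can be lifted to a conjugating automorphism. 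Second, your parenthetical claim that the multiset version of the jump is Borel bireducible with the set version is exactly the point where a careless argument would fail; it is true, via the Borel map sending $(x_m)$ to the sequence of pairs $\bigl(x_m,\,|\{k<m : x_k \mathrel{E} x_m\}|\bigr)$, whose associated \emph{set} of classes records each class with its multiplicity, but this deserves a line of proof. Your infinite-multiplicity pairing trick in the lower bound is the right device (and is implicitly what makes the paper's ``infinitely many disjoint copies'' formulation work), and the prime-coding of reals into cycle types for the base case is fine. With those two verifications filled in, your proof is complete and arguably more detailed than the outline in the paper.
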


\begin{proof}[Proof outline]
  To begin, an automorphism of $\omega^{\leq1}$ is simply a permutation of $\omega$. It is well-known that the permutations of $\omega$ are classified up to conjugacy by their cycle type, and it easily follows that this conjugacy problem is Borel bireducible with $\Delta(2^\omega)$.
  
  Next, an automorphism of $\omega^{\leq2}$ is analogous to an automorphism of infinitely many disjoint copies of the complete graph on infinitely many vertices. By \cite[Theorem~3.2]{ce1}, the conjugacy problem for automorphisms of this latter graph is Borel bireducible with $\Delta(2^\omega)^+$.
  
  Proceeding inductively, an automorphism of $\omega^{\leq n+1}$ may be viewed as an automorphism of infinitely many disjoint copies of $\omega^{\leq n}$. Inspecting the details of the argument of \cite[Theorem~3.2]{ce1}, it is not difficult to show that the conjugacy relation on $\Aut(\omega^{\leq n+1})$ is Borel bireducible with the jump of the conjugacy relation on $\Aut(\omega^{\leq n})$. This completes the proof.
\end{proof}

By contrast, it is well-known (and not difficult to check) that the isomorphism problem for subtrees of $\omega^{\leq n}$ has the complexity $\Delta(2^\omega)^{+n-2}$.

We close this section with an example of a non-regular unrooted tree such that the complexity of the conjugacy problem is distinct from all previous examples. Recall that $E_0$ denotes the equivalence relation on $2^\omega$ defined by $x\mathrel{E}_0y$ iff $x(n)=y(n)$ for all but finitely many $n$. It is a standard fact that $E_0$ is Borel bireducible with the equivalence relation $E_\Z$ induced by the translation action of $\Z$ on $\mathcal P(\Z)$. The complexity of the relations $E_0$ and $E_{\Z}$ is just above that of the smooth equivalence relations (in the formal sense known as the generalized Glimm--Effros dichotomy), and well below $E_\infty$. We refer the reader to \cite[Chapter~6]{gao} for more about $E_0$.

\begin{proposition}
  \label{prop:tz}
  Let $T_\Z$ denote the tree shown in the Figure~\ref{fig:Ztree}. The conjugacy problem for automorphisms of $T_\Z$ of types~(a) and~(b) are smooth, and the conjugacy problem for automorphism of $T_\Z$ of type~(c) is Borel bireducible with $E_0$.
\end{proposition}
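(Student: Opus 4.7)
My plan is to first identify a canonical bi-infinite path (the \emph{spine}) in $T_\Z$ that is preserved setwise by every automorphism, and to describe $\Aut(T_\Z)$ explicitly in terms of its action on the spine together with its local action on the finite decorations attached to each spine vertex. Inspection of the figure shows that the induced action on the spine is a translation or reflection, and that the local action at each spine vertex is constrained to a finite group of swaps of that vertex's decorations, so that $\Aut(T_\Z)$ is a semidirect product of the infinite dihedral group $\Z \rtimes \Z/2$ with a countable product of finite groups.

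For type~(a), any edge-inverting automorphism must invert a spine edge, and the rigidity of the decorations forces only finitely many conjugacy classes, yielding smoothness. For type~(b), arguing as in Proposition~\ref{prop:typeb}, such an automorphism is determined up to conjugacy by the amplitude of its translation along the spine, since the spine is the unique bi-infinite path on which a translation of $T_\Z$ can act. This gives a Borel reduction to $\Delta(\omega)$ and hence smoothness.

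For type~(c), the automorphism $\phi$ acts on the spine either as the identity or as a reflection through a spine vertex. The remaining data is a sequence $\psi_\phi \in 2^\Z$ recording the local swap pattern at each spine vertex. The key step is to show that $\phi \mapsto \psi_\phi$ is a Borel reduction of the conjugacy relation to the orbit equivalence relation of the natural action of $\Z \rtimes \Z/2$ on $2^\Z$. Since $\Z$ sits as an index-$2$ subgroup of $\Z \rtimes \Z/2$, this orbit equivalence relation differs from $E_\Z$ only by a finite extension and is therefore Borel bireducible with $E_0$. For the reverse direction, given $S \subseteq \Z$ one explicitly constructs $\phi_S \in \Aut(T_\Z)$ acting as the identity on the spine and performing the local swap at exactly those spine vertices $n \in S$; the argument that $S \mapsto \phi_S$ is a Borel reduction mirrors the forward direction.

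The principal obstacle will be verifying the two-way implication for conjugacy in the type~(c) case, particularly handling the reflecting type~(c) automorphisms (whose coding sequence must be symmetric about a spine vertex) alongside the spine-fixing ones, and checking that the resulting extra orbits do not spoil the $E_0$ bireducibility. These are bookkeeping issues once the semidirect product structure of $\Aut(T_\Z)$ is in hand, but they must be done carefully in order to produce a genuine Borel reduction in each direction.
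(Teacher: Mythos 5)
Your type~(c) argument is essentially the paper's: code a spine-fixing automorphism by the set of $Y$-shapes at which it swaps the two leaves, and identify conjugacy with the shift action on $\mathcal P(\Z)$. However, you have misread the tree. The pendant paths of lengths $1$ and $2$ inserted between consecutive $Y$-shapes are there precisely to break reflection symmetry: reading the decorations along the spine gives the periodic pattern ($Y$-shape, length-$2$ path, length-$1$ path), and the reversal of this pattern is not a shift of it, so no automorphism of $T_\Z$ can reverse the spine. Hence $\Aut(T_\Z)$ induces only translations on the spine --- it is not a dihedral extension --- and the ``principal obstacle'' you single out, namely reflecting type~(c) automorphisms with symmetric coding sequences, is vacuous. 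A type~(c) automorphism fixes some vertex, hence induces the trivial translation, hence is exactly a pattern of leaf-swaps; the map $A\mapsto\phi_A$ is an honest Borel bijection from $\mathcal P(\Z)$ onto the type~(c) automorphisms, and conjugacy is exactly $E_\Z$. (Your fallback observation that a finite-index extension of $E_\Z$ is still bireducible with $E_0$ is correct, so your route would still reach the right answer, just with needless bookkeeping.)

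The same misreading makes your type~(a) paragraph unreliable as stated. You are right that only a spine edge could be inverted, but inverting a spine edge would force a reflection of the spine, which does not exist; so $T_\Z$ has no type~(a) automorphisms at all. Worse, under your own assumption that reflections exist, the claim of ``only finitely many conjugacy classes'' would be false: an edge-inverting automorphism could additionally swap or not swap the leaves of each $Y$-shape in the two paired half-spines, and these patterns yield continuum many conjugacy classes (still smooth, but not finitely many). The robust argument, and the one the paper uses, is simply to invoke Proposition~\ref{prop:typea}, which gives smoothness of the type~(a) problem for every finitely branching tree. Your type~(b) paragraph matches the paper and is fine.
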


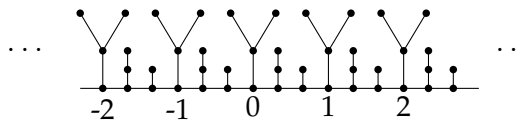
\begin{figure}[ht]
  \begin{tikzpicture}[every node/.style={circle,fill,inner sep=1pt}]
    \foreach \i in {-2,...,2} {
      \draw (\i,0)
      node[label={[label distance=-2pt,below]\i}]{}
      -- (\i,.5) node{} -- (\i-.3,1) node{};
      \draw (\i,.5) -- (\i+.3,1) node{};
      \draw (\i+.33,0) node{} -- (\i+.33,.25) node{} -- (\i+.33,.5) node{};
      \draw (\i+.66,0) node{} -- (\i+.66,.25) node{};
    }
    \draw (-2.3,0)--(3,0);
    \node[fill=none] at (-3,.5) {$\cdots$};
    \node[fill=none] at (3.5,.5) {$\cdots$};
  \end{tikzpicture}
  \caption{The tree $T_{\mathbb Z}$.\label{fig:Ztree}}
\end{figure}

\begin{proof}
  The claim about automorphisms of $T_\Z$ of type~(a) follows directly from Proposition~\ref{prop:typea}. The claim about automorphisms of $T_\Z$ of type~(b) follows from the argument of Proposition~\ref{prop:typeb}, as $T_\Z$ possesses a unique bi-infinite path.

  For the claim about automorphisms of type~(c), we first show that $E_\Z$ is Borel reducible to the conjugacy relation on automorphisms of $T_\Z$ of type~(c). Given a subset $A\subset\mathbb Z$ we let $\phi_A$ be the automorphism of $T_\Z$ which exchanges the two degree~$1$ nodes in the $Y$-shape corresponding to each $n\in A$, and fixes all nodes in the $Y$-shape corresponding to each $n\notin A$. Then $\phi_A$ is of type~(c), and it is easy to check that this mapping satisfies $A\mathrel{E}_\Z A'$ iff $\phi_A\sim\phi_{A'}$.

  To establish the Borel reduction in the converse direction, it is sufficient to show that the mapping $A\mapsto\phi_A$ is bijective. Indeed, it is easy to see that it is one-to-one. To see that it is onto, first observe that any automorphism of $T_\Z$ induces a translation on the $Y$-shapes of $T_\Z$. Moreover, any type~(c) automorphism must induce the trivial translation. Hence every type~(c) automorphism consists of swaps of the degree~$1$ nodes of $Y$-shapes of $T_\Z$, and is of the form $\phi_A$ for some $A\subset\Z$. This completes the proof.
\end{proof}

We close by conjecturing that one can find countable trees whose conjugacy problems are of a variety of distinct complexities.

\bibliographystyle{alpha}
\bibliography{bib}

\end{document}